\documentclass[11pt]{amsart}
\usepackage{lmodern}
\usepackage[T1]{fontenc}
\usepackage{microtype}
\usepackage{amssymb}
\usepackage{amsthm}
\usepackage{amscd}
\usepackage{hyperref}
\usepackage{cite}
\usepackage{color}



\newtheorem{theorem}{Theorem}[section]

\theoremstyle{definition}
\newtheorem{definition}[theorem]{Definition}


\makeatletter

\def\dotminussym#1#2{%
  \setbox0=\hbox{$\m@th#1-$}%
  \kern.5\wd0%
  \hbox to 0pt{\hss\hbox{$\m@th#1-$}\hss}%
  \raise.6\ht0\hbox to 0pt{\hss$\m@th#1.$\hss}%
  \kern.5\wd0}

\mathchardef\mhyphen="2D


\allowdisplaybreaks[2]

\begin{document}

\title{Computable Ramsey's Theorem for Pairs Needs Infinitely Many $\Pi^0_2$ Sets}
\author{Gregory Igusa and Henry Towsner}
\date{\today}

\begin{abstract}

In \cite{J}, Theorem 4.2, Jockusch proves that for any computable k-coloring of pairs of integers, there is an infinite $\Pi^0_2$ homogeneous set. The proof uses a countable collection of $\Pi^0_2$ sets as potential infinite homogeneous sets. In a remark preceding the proof, Jockusch states without proof that it can be shown that there is no computable way to prove this result with a finite number of $\Pi^0_2$ sets. We provide a proof of this latter fact.

\end{abstract}


\maketitle

\section{Introduction}


In \cite{J}, Jockusch initiated the study of the effective content of Ramsey's theorem, stated below. This study has proved to be enormously fruitful in effective combinatorics, and also in reverse mathematics. In Theorem 4.2 of \cite{J}, Jockusch proves that for any computable k-coloring of pairs of integers, there is an infinite $\Pi^0_2$ homogeneous set. Before this proof, he makes the remark that even for 2-colorings of pairs of integers (basic recursive partitions, in his language), it can be shown that there is no uniform computable way to take an index for an arbitrary computable coloring, and to produce a finite number of indices of $\Pi^0_2$ sets with the property that one of those $\Pi^0_2$ sets will be an infinite homogeneous set for that coloring.

The proof of this fact appears to have been lost, and recently Jockusch has asked for a proof, which we present here.

\section{Definitions}

\begin{definition}

A $k$-coloring of the $n$-element subsets of $\mathbb{N}$ is a function $c:[\mathbb{N}]^n\rightarrow k$, from the set of unordered $n$-element subsets of $\mathbb{N}$, to $k$.

\end{definition}

We think of such a coloring as a rule that assigns a color to every $n$-element subset of $\mathbb{N}$, using up to $k$ different colors.

Ramsey's theorem is then the following theorem of combinatorics.

\begin{theorem}[Ramsey's Theorem]

For any $n,k\geq 1$, and any $k$-coloring $c:[\mathbb{N}]^n\rightarrow k$, there exists an infinite subset $H\subseteq\mathbb{N}$ such that $c\upharpoonright [H]^n$ is a constant function.
\end{theorem}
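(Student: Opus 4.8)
The plan is to prove Ramsey's Theorem by the standard induction on $n$, the size of the subsets being colored, with the infinite pigeonhole principle serving as the base case and a recursive thinning construction handling the inductive step.

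For the base case $n=1$, a $k$-coloring of $[\mathbb{N}]^1=\mathbb{N}$ is just a partition of $\mathbb{N}$ into $k$ classes; since $\mathbb{N}$ is infinite, at least one class is infinite, and that class is the required homogeneous set $H$.

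For the inductive step, assume the theorem holds for $n$ and fix a $k$-coloring $c\colon[\mathbb{N}]^{n+1}\to k$. I would construct by recursion a strictly increasing sequence $a_0<a_1<\cdots$ of integers, a decreasing chain $\mathbb{N}=S_{-1}\supseteq S_0\supseteq S_1\supseteq\cdots$ of infinite sets, and colors $\hat c_0,\hat c_1,\ldots<k$ as follows: given the infinite set $S_{i-1}$, put $a_i=\min S_{i-1}$ and apply the induction hypothesis to the $k$-coloring $F\mapsto c(\{a_i\}\cup F)$ on $[S_{i-1}\setminus\{a_i\}]^n$ to obtain an infinite homogeneous set $S_i\subseteq S_{i-1}\setminus\{a_i\}$ with constant value $\hat c_i$. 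Because the $S_i$ are nested and each $a_j$ with $j>i$ lies in $S_i$, one checks that $c(\{a_{i_0},a_{i_1},\ldots,a_{i_n}\})=\hat c_{i_0}$ whenever $i_0<i_1<\cdots<i_n$; that is, the color of an $(n+1)$-element subset of $\{a_i:i\in\mathbb{N}\}$ depends only on the index of its least element. Applying the case $n=1$ (pigeonhole) to the coloring $i\mapsto\hat c_i$ yields an infinite $I\subseteq\mathbb{N}$ with $\hat c_i=c^*$ for all $i\in I$, and then $H=\{a_i:i\in I\}$ is infinite with $c\upharpoonright[H]^{n+1}$ constantly equal to $c^*$.

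The points that need care, rather than any genuine obstacle, are: that the recursion is legitimate — each $S_i$ is infinite, so $\min S_{i-1}$ exists and the induction hypothesis really does apply at every stage (a dependent-choice-style recursion, harmless in this setting) — and the verification of the ``least index determines the color'' property, which is immediate from the nesting $S_0\supseteq S_1\supseteq\cdots$ together with $a_i\notin S_i$ but $a_j\in S_i$ for $j>i$. All of the substance lies in organizing this recursion and in the two appeals to the weaker statements, namely Ramsey's theorem for $n$-element sets and the infinite pigeonhole principle.
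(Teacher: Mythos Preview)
Your argument is correct and is the standard inductive proof of the infinite Ramsey theorem. Note, however, that the paper does not actually prove this statement: it is quoted as a classical background result (attributed simply as ``Ramsey's Theorem'') and no proof is given. So there is nothing to compare against; your write-up supplies a proof where the paper deliberately omits one.
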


We call such an $H$ an infinite homogeneous set for $c$.

In this paper, we will be primarily concerned with the case when $n=k=2$.

In \cite{J}, Jockusch proves the following.

\begin{theorem}[Jockusch, \cite{J}, Theorem 4.2]

If $c:[\mathbb{N}]^2\rightarrow k$ is a computable k-coloring of pairs, then there exists an infinite $\Pi^0_2$ homogeneous set for $c$.
\end{theorem}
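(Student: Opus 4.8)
Since the conclusion is purely existential, it is enough to produce a countable list of $\Pi^0_2$ sets, \emph{each} of which is homogeneous for $c$, at least \emph{one} of which is infinite; the impossibility of effectively pointing to the infinite one is precisely the phenomenon the present paper will later show to be unavoidable, but it causes no trouble here. I would index the list by pairs $(F,i)$, where $i<k$ is a color and $F\subseteq\mathbb{N}$ is a finite set every pair of which receives color $i$, and attach to each such pair an $i$-homogeneous set $H_{F,i}\supseteq F$ obtained as a canonical ``completion'' of the seed $F$.

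The first thing to rule out is the obvious completion: adjoin to $F$, one at a time, the least larger integer compatible in color $i$ with everything chosen so far. This produces a \emph{computable} $i$-homogeneous set, but for a cleverly chosen computable $c$ this greedy set turns out to be finite for \emph{every} seed $(F,i)$ --- swallowing a harmless-looking element early can destroy all later room --- so a subtler completion is needed. Instead I would complete $F$ by a Mathias-style procedure run against $\emptyset'$: maintain a current finite $i$-homogeneous $E\supseteq F$ together with its reservoir $C_E=\{y>\max E: c(\{a,y\})=i\text{ for all }a\in E\}$, and at each step adjoin the least $y\in C_E$ for which the updated reservoir $C_{E\cup\{y\}}$ is still infinite, halting --- with $H_{F,i}$ finite --- when no such $y$ exists. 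Two properties then need to be checked, and they pull against each other. \emph{Complexity}: as just phrased, the step rule asks whether a computable set is infinite, which is a $\Pi^0_2$ question and, read literally, runs the construction off $\emptyset''$ and the output up to $\Delta^0_3$; one must instead verify that ``$y\in H_{F,i}$'' unfolds, uniformly in $F$ and $i$, into a genuine $\Pi^0_2$ predicate of $y$. \emph{Survival}: one must show that for at least one pair $(F,i)$ the procedure never halts, so that $H_{F,i}$ is infinite; for this I would invoke Ramsey's Theorem in its full, non-effective form, applied to the restriction of $c$ to the current reservoir, and run a short case analysis on the color of the homogeneous subset it returns --- keeping the color when it matches, otherwise restarting from a fresh seed of the new color, the bookkeeping being arranged so as not to regress forever --- to conclude that some seed escapes stalling.

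The real obstacle, I expect, is reconciling these two demands: a cruder procedure makes survival easy but leaves the output only $\Delta^0_3$, whereas a tamer, more nearly computable procedure keeps the output simple but may stall from every seed, so the construction has to be landed exactly at the $\Pi^0_2$ level. The point to exploit is that whether a given $y$ is ever inspected by the procedure started at $(F,i)$, and what is then decided about it, is actually settled at some finite stage after only $\emptyset'$-bounded ``looking ahead'', so that the eventual status of $y$ is governed by a $\forall\exists$ condition rather than by the $\forall\exists\forall$ condition the definition first suggests. Making this heuristic into a precise collapse --- and, alongside it, doing the mildly fussy bookkeeping needed to handle arbitrary $k$ rather than only $k=2$, which forces us to record the color carried by each finite seed --- is the technical heart; everything else rests on the (non-effective) Ramsey's Theorem already quoted above.
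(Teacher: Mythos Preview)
The paper does not give a proof of this statement. Theorem~2.3 is quoted, with attribution, as Theorem~4.2 of Jockusch's 1972 paper~\cite{J}, and serves only as background motivating the paper's own contribution (the non-uniformity result stated as Theorem~2.4 and established in Sections~3--4). There is therefore no proof in the present paper to compare your proposal against.

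For what it is worth, your outline is broadly in the spirit of Jockusch's original argument, which does produce a countable family of $\Pi^0_2$ homogeneous sets and argues non-effectively that at least one must be infinite --- indeed, the paper's introduction says explicitly that Jockusch's proof ``uses a countable collection of $\Pi^0_2$ sets as potential infinite homogeneous sets.'' The issue you single out as the real obstacle, namely the collapse from an apparent $\Delta^0_3$ description of $H_{F,i}$ to a genuine $\Pi^0_2$ one, is exactly the technical crux; your sketch, however, only \emph{promises} this collapse (``making this heuristic into a precise collapse\ldots is the technical heart'') rather than carrying it out, so as written it is a plausible plan rather than a proof.
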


We prove the following.

\begin{theorem}
There does not exist a partial computable $f$ with the property that for any $e$, if $e$ is the code of a total computable 2-coloring $c:[\mathbb{N}]^2\rightarrow 2$ of pairs, then $f(e)$ halts, producing the code for a finite set $\{a_0,a_1,\dots,a_k\}$ of indices for $\Pi^0_2$ sets with the property that at least one of those $\Pi^0_2$ sets is an infinite homogeneous subset for $c$.

\end{theorem}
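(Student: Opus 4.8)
The plan is to assume that such an $f$ exists and to derive a contradiction, using the recursion theorem to build a total computable $2$-coloring $c$ that already ``knows'' the list $f(e)$ of $\Pi^0_2$ indices attached to its own index $e$, and that is built so that none of the listed sets is an infinite homogeneous set for $c$.

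First I would set up the recursion-theoretic skeleton. By the recursion theorem, fix an index $e$ such that $\varphi_{e}$ is the $2$-coloring $c$ produced by the construction below, a construction that is allowed to refer to $e$. The construction runs in stages; at stage $s$ it has committed the values of $c$ on all pairs from $\{0,\dots,n_{s}\}$, and it simultaneously runs $f(e)$ for $s$ steps. As long as $f(e)$ has not halted, it extends $c$ by giving every new pair the color $0$, so that $c$ is total no matter what $f$ does. If $f(e)$ never halts, then $c$ is a total computable $2$-coloring while $f(e)$ diverges, contradicting the assumed property of $f$; so we may assume $f(e)$ halts at some stage $s_{0}$, producing a code for a finite set $\{a_{0},\dots,a_{k}\}$ of $\Pi^0_2$ indices. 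From stage $s_{0}$ onward the construction has uniform access to computable approximations $A_{a_{i},s}$ with $A_{a_{i}}=\limsup_{s} A_{a_{i},s}$.

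The heart of the matter is then a uniform combinatorial claim: given any finitely many $\Pi^0_2$ indices $a_{0},\dots,a_{k}$ and any finite partial $2$-coloring of pairs, one can computably extend it to a total computable $2$-coloring $c$ for which no $A_{a_{i}}$ is an infinite homogeneous set. I would prove this by a construction with requirements $R_{i}$ ($i\le k$) asserting ``$A_{a_{i}}$ is finite, or $A_{a_{i}}$ is not homogeneous for $c$''. A single $R_{i}$ repeatedly makes \emph{attempts}: it appoints fresh large numbers that currently appear in the approximation $A_{a_{i},s}$, and once it has enough of them it commits a color pattern on the pairs among them that forces two of those pairs to get different colors. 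Because the followers of each attempt are fresh, distinct requirements never contend for the same pair, and committed colors are never revised, so $c$ is well defined and computable. If $A_{a_{i}}$ is finite, $R_{i}$ is satisfied outright; otherwise one must show that some attempt of $R_{i}$ consists entirely of elements of $A_{a_{i}}$, which certifies that $A_{a_{i}}$ contains two differently colored pairs and hence is not homogeneous.

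I expect the main obstacle to be exactly this verification. A $\Pi^0_2$ set has no finite membership certificate: an element may sit in $A_{a_{i},s}$ arbitrarily long, or return to it arbitrarily often, and still fail to belong to $A_{a_{i}}$; and a hostile approximation — for instance one with $|A_{a_{i},s}|\le 1$ that only feeds in the genuine elements of $A_{a_{i}}$ after the construction's ``fresh'' bound has overtaken them — can drive a crude appointment scheme into committing only to decoys while $A_{a_{i}}$ remains infinite. So the attempt mechanism and its bookkeeping have to be arranged with care; this is where the hypotheses enter essentially, using that there are only two colors and only finitely many sets to defeat, so that — as the main technical point — no adversarial family of approximations can both foil every attempt and leave all of $A_{a_{0}},\dots,A_{a_{k}}$ infinite. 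Granting the combinatorial claim, the construction produces a total computable $c$, indexed by $e$, for which none of $A_{a_{0}},\dots,A_{a_{k}}$ is an infinite homogeneous set; this contradicts the defining property of $f$ at the input $e$, completing the proof. (Jockusch's theorem is not itself needed for this contradiction; it is what makes the reduction to a \emph{finite} list the natural sharpening to ask about.)
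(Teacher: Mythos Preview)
Your recursion-theoretic skeleton is correct and matches the paper's exactly: use the recursion theorem to obtain the index $e$ in advance, color trivially until $f(e)$ halts, then diagonalize against the finite list of $\Pi^0_2$ sets. The whole weight of the argument is, as you say, the combinatorial claim.

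However, the mechanism you sketch for that claim --- each $R_i$ appoints \emph{fresh} followers from the current approximation and then commits colors on the pairs among them --- does not work, and the obstacle you flag is fatal to it rather than merely delicate. Because followers must lie above everything used so far, an approximation can always populate the region above your current freshness bound with decoys while exhibiting the genuine elements of $A_{a_i}$ only below it; every attempt is then committed on decoys, the freshness bound races upward past each real element before you are allowed to touch it, and no attempt ever captures a tuple from $A_{a_i}$ even though $A_{a_i}$ is infinite. The problem is not bookkeeping but the very idea of fixing a finite set of ``followers'' and coloring the pairs \emph{among them} in the hope that they are real: for $\Pi^0_2$ sets there is no moment at which any such hope is justified.

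The paper resolves this with a genuinely different mechanism. It never appoints followers. Instead, when a new $b$ arrives it colors the entire column $\{c(a,b):a<b\}$ at once, choosing these values so as to spoil, simultaneously, \emph{every} set that could currently be the first $n+1$ elements of some $S_j$ (where $n$ is the number of $\Pi^0_2$ sets in the list). For fixed $j$, as the approximation parameter $i$ ranges up to $b$, these candidate $(n+1)$-element sets form what the paper calls an $(n+1)$-\emph{train}: each set in the sequence differs from its predecessor only by replacing some elements with strictly larger ones. The paper's main lemma says that given $n$ such trains and the already-fixed coloring of pairs below $b$, one can always choose the column $c(\cdot,b)$ so that every $\tau^j_i$ that is currently monochromatic of color $\iota$ contains some $a$ with $c(a,b)=\overline{\iota}$; the size $n+1$ --- one more than the number of sets --- is exactly the pigeonhole room this requires. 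Once $b$ is large enough, the true first $n+1$ elements of $S_j$ occur as one of the $\tau^j_i$, so for every sufficiently large $b\in S_j$ the column coloring breaks homogeneity on $S_j$. This column-by-column spoiling of \emph{all} current candidates, rather than betting on any particular one, is the idea your plan is missing.
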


Indeed, we prove slightly more: that there is no such $f$ where $f(e)$ is the code for a c.e. set $W_{f(e)}$ enumerating finitely many codes for $\Pi^0_2$ sets.

\section{Trains}

\begin{definition}
  An \emph{$n$-train} is a sequence of distinct sets of size $n$, $\tau_0,\tau_1,\ldots,\tau_m$ such that for every $a\in\tau_{i+1}\setminus\tau_i$, $a>\tau_i$.

For instance
\[\{1,2,3\}, \{2,3,5\}, \{5, 7, 9\}, \{5,9,12\}\]
is a $3$-train.

If $0\leq k< n$ we write $\tau(k)$ for the $k+1$-st element of $\tau$ in the usual ordering of $\mathbb{N}$.
\end{definition}

Our main tool is the following combinatorial lemma.  We color pairs from $R,B$, and if $\iota\in\{R,B\}$, we write $\overline{\iota}$ for the opposite color: $\overline{R}=B$, $\overline{B}=R$.

\begin{theorem}
  Suppose that for each $j<n$, $\tau^j_0,\tau^j_1,\ldots,\tau^j_{m_j}$ is an $n+1$-train.  Let $c:[\mathbb{N}]^2\rightarrow\{R,B\}$ be given.  Then there is a coloring $c^*:\bigcup\tau^j_i\rightarrow\{R,B\}$ such that for each $\tau^j_i$ on which $c\upharpoonright[\tau^j_i]^2=\iota$ homogeneously, there is an $a\in\tau_i^j$ with $c^*(a)=\overline{\iota}$.  
\end{theorem}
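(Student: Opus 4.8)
The plan is to recast the conclusion as a statement about $c^*$ viewed as a two-coloring of the vertex set $\bigcup\tau^j_i$, to isolate the two structural inputs — one coming from $c$, one from the trains — and then to build the coloring by an induction on $n$ that peels off the trains one at a time.

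\emph{Reformulation and basic facts.} Put $S=(c^*)^{-1}(R)$. The property demanded of $c^*$ is then precisely that $S$ contain no car on which $c$ is homogeneously $R$ while $S$ meet every car on which $c$ is homogeneously $B$ (and symmetrically for the complement, swapping $R$ and $B$). Two facts drive everything. A \emph{color fact}: if $\tau$ is homogeneously $R$ and $\sigma$ is homogeneously $B$, then $|\tau\cap\sigma|\le 1$, since two common elements $a\ne b$ would force $c(\{a,b\})$ to equal both $R$ and $B$. And \emph{train facts}: in a train $\tau_0,\dots,\tau_m$ the maxima $\max\tau_0<\max\tau_1<\cdots$ are strictly increasing (each $\tau_{i+1}\setminus\tau_i$ is nonempty and lies above $\max\tau_i$), the minima are nondecreasing, and if $a\in\tau_i\cap\tau_{i'}$ with $i\le i'$ then $a\in\tau_\ell$ for all $i\le\ell\le i'$ (a dropped element cannot re-enter, since on re-entry it would have to exceed a later maximum and hence exceed $\max\tau_i$, contradicting $a\le\max\tau_i$). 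Thus in one train each vertex occupies a consecutive block of cars, and distinct cars of one train have distinct maxima.

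\emph{Construction.} Induct on $n$. Having handled the first $n-1$ trains (the induction hypothesis, applied to them after harmlessly deleting least elements so their $(n+1)$-cars become $n$-cars — a subcoloring of a homogeneous coloring is homogeneous, and a witness inside the smaller car is a witness inside the larger one), extend the partial coloring to $T^{n-1}$ thus: for each car $\tau^{n-1}_i$ on which $c$ is homogeneously $\iota$, choose a ``witness'' vertex of $\tau^{n-1}_i$ and set it to $\overline\iota$. The preferred witness is $\max\tau^{n-1}_i$; by the train facts these maxima are pairwise distinct, so they never conflict among themselves, and when $\max\tau^{n-1}_i$ has already been set to $\iota$ during the handling of the earlier trains one falls back to any vertex of $\tau^{n-1}_i$ not yet set to $\iota$. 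Since a purely greedy assignment can over-commit vertices to the wrong color, the induction hypothesis should be stated with some slack — for instance, allowing a prescribed set of vertices already committed to one side, provided no car has been forced to lie wholly on the wrong side — so that a ``stuck'' car can be rescued by a short recoloring along an alternating chain.

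\emph{The crux.} The only place where the numerology ``$(\text{number of trains}) = (\text{car size})-1$'' enters, and the step I expect to be the real obstacle, is the claim that a homogeneous car can never be blocked at all of its vertices, i.e.\ can never have all $n+1$ of its vertices committed to the wrong color. If it were, then (by the color fact) each of its $n+1$ vertices would be the designated witness of a \emph{distinct} car homogeneous in the opposite color whose intersection with our car is exactly that single vertex; these $n+1$ cars are distributed among only $n$ trains, so two of them share a train, and then the train facts pin down their relative indices, maxima, and shared vertices so rigidly that — propagating these constraints around all $n$ trains — one reaches a contradiction. Making this counting-and-chasing argument close cleanly is the technical heart of the proof; with it in hand, the reformulation and the induction supply the rest.
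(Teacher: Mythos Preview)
Your reformulation and the basic facts are correct, but there is a genuine gap exactly where you flag the crux, and it is not merely missing detail: the pigeonhole-and-chase sketch does not close. Suppose a homogeneous-$\iota$ car $\sigma$ in the last train has all $n+1$ vertices already committed to $\iota$. You want each such vertex $v$ to be the designated witness of some $\overline\iota$-homogeneous car $\rho(v)$, with $\rho(v)\cap\sigma=\{v\}$ by the color fact, and then to pigeonhole two of the $\rho(v)$ into one train. But after a black-box application of the induction hypothesis you no longer have witnesses---you only have a coloring, and a vertex may be colored $\iota$ as a default extension rather than to serve any car. Even granting a witness structure, the train facts do not yield a contradiction: nothing prevents two (or all $n+1$) opposite-colored cars of a single foreign train from each meeting $\sigma$ in one distinct vertex; taking witnesses to be maxima gives only $v_1<v_2$, and the consecutive-occupation fact is vacuous since already $v_1\notin\tau^j_{i_2}$ and $v_2\notin\tau^j_{i_1}$. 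Your hedge about a strengthened induction hypothesis and ``alternating chains'' acknowledges the hole without filling it, and no concrete stronger hypothesis is offered.

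The paper does not induct on $n$. It processes all cars of all $n$ trains in a single global order---the opposite of reverse lexicographic, so that within each train the later cars are handled first---assigning a witness color $\overline\iota$ greedily only when the car is homogeneous $\iota$ and not already satisfied, and \emph{never choosing the minimum of the car as witness}. These two choices do all the work. First, if a previously chosen witness $a$ from the \emph{same} train lies in the current car, then (because the earlier-processed car sits later in the train) the minimum of that earlier car also lies in the current car; the pair $\{\text{that minimum},a\}$ then lies in both cars, so they have the same homogeneous color and $a$ already satisfies the current requirement. Hence the current car's own train contributes zero unhelpful commitments. Second, for each foreign train, the processing order forces every witness except the most recently chosen one to exceed the current car's maximum, so at most one can land inside. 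That is at most $n-1$ committed vertices in an $(n+1)$-car; two remain free, and the larger of them (being non-minimal) becomes the new witness. The numerology is delivered by the order and the ``never the minimum'' rule, not by an after-the-fact count.
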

\begin{proof}
  We define an ordering $\prec$ on the sets $\tau^j_i$: we say $\tau^j_i\prec\tau^{j'}_{i'}$ if, taking $k< n+1$ \emph{largest} such that $\tau^j_i(k)\neq \tau^{j'}_{i'}(k)$, we have $\tau^j_i(k)>\tau^{j'}_{i'}(k)$.  (This is the opposite of the reverse lexicographic order, which we choose not to refer to as the reverse reverse lexicographic order.)  Note that for a fixed $j$ and $i'<i$, we have that $\tau^j_i(n)>\tau^j_{i'}(n)$, and so $\tau^j_i\prec\tau^j_{i'}$.

For each $r$, let $j_r,i_r$ be such that $\tau^{j_r}_{i_r}$ is the $r$-th element in this ordering.

We define the coloring $c^*$ in stages, considering $\tau^{j_r}_{i_r}$ at the $r$-th stage.  We let $c_0=\emptyset$.  At stage $r$, we meet the $r$th requirement: that if $c\upharpoonright[\tau^{j_r}_{i_r}]^2=\iota$ homogeneously, then there is at least one $a\in\tau^{j_r}_{i_r}$ such that $c^*(a)=\overline{\iota}$. 
 Suppose we have constructed a partial function $c_r^*$ and, for some set of $s<r$, chosen values $a_s$ so that:
\begin{itemize}
\item at stage $r$, $c_r^*$ is only defined on $a_s$ for $s<r$,
\item if $a_s$ is defined then $a_s=\tau^{j_s}_{i_s}(k)$ for some $k>0$, $c\upharpoonright[\tau^{j_s}_{i_s}]^2=\iota_s$ homogeneously, and $c_r^*(a_s)=\overline{\iota}_s$,
\item for each $s<r$, if $c\upharpoonright[\tau^{j_s}_{i_s}]^2=\iota_s$ homogeneously then there is an $s'\leq s$ so that $a_{s'}\in\tau^{j_s}_{i_s}$ with $c_r^*(a_{s'})=\overline{\iota}_s$, and
\item if $s'<s<r$, $a_{s'},a_s$ are both defined, and $a_{s'}\in\tau^{j_s}_{i_s}$ then $c_r^*(a_s)\neq c_r^*(a_{s'})$.
\end{itemize}

The first clause asserts that we meet our requirements in order. The second asserts that if we acted at stage $s$, then we acted because there was a requirement to meet, and we acted to meet that requirement. It furthermore asserts that we did not act with the smallest element of $\tau^{j_s}_{i_s}$. The third clause asserts that each earlier requirement has been met. The final clause asserts that if a requirement was already met, then we did not act again to meet it. 


We make the following crucial observation: suppose $s'<s\leq r$, $j_{s'}=j_s$, $a_{s'}\in\tau^{j_s}_{i_s}$, and $c\upharpoonright[\tau^{j_s}_{i_s}]^2=\iota_s$ homogeneously.  Then $c_r^*(a_{s'})=\overline{\iota}_s$.  (This implies that $a_s$ is not defined.)
To see this, set $j=j_{s'}=j_s$ and observe that every $a\in\tau^{j}_{i_{s'}}\setminus\tau^{j}_{i_s}$ must have $a>\tau^{j}_{i_s}$.  Since $\tau^{j}_{i_{s'}}(0)<a_{s'}$, we must have $\tau^{j}_{i_{s'}}(0)\in\tau^{j}_{i_s}$, and therefore $c(\tau^{j}_{i_{s'}}(0),a_{s'})=\iota$.  Therefore $c_r^*(a_{s'})=\overline{\iota}$.

We now attempt to construct $c_{r+1}^*\supseteq c_r^*$.    If $c\upharpoonright[\tau^{j_r}_{i_r}]^2$ is not homogeneous, we have no commitment regarding $c^*\upharpoonright \tau^{j_r}_{i_r}$, so set $c^*_{r+1}=c_r^*$.  Suppose $c\upharpoonright[\tau^{j_r}_{i_r}]^2=\iota$ homogeneously.  If there is an $s<r$ such that $a_s$ is defined, $a_s\in\tau^{j_r}_{i_r}$, and $c^*(a_s)=\overline{\iota}$, then again we may set $c^*_{r+1}=c_r^*$.

So suppose there is no such $a_s$.  By the observation, if $s<r$, $j_s=j_r$, and $a_s$ is defined, we have $a_s\not\in\tau^{j_r}_{i_r}$.

If $s'<s<r$ with $j_{s'}=j_{s}$ and $a_{s'},a_{s}$ both defined, the observation implies that $a_{s'}\not\in\tau^{j_s}_{i_s}$.  Therefore $a_{s'}>\tau^{j_s}_{i_s}$.  But $\tau^{j_s}_{i_s}(n)\geq\tau^{j_r}_{i_r}(n)$, so $a_{s'}\not\in\tau^{j_r}_{i_r}$.

So $c^*_r$ is defined on at most $n-1$ elements of $\tau^{j_r}_{i_r}$---at most one for each $j<n$ other than $j_r$.  In particular, there are at least two elements in $\tau^{j_r}_{i_r}$ on which $c^*_r$ is undefined; taking the larger to be $a_r$, we set $c^*_r(a_r)=\overline{\iota}$, and we have $a_r=\tau^{j_r}_{i_r}(k)$ for some $k>0$.

We define $c^*$ to be any extension of $\bigcup_r c^*_r$ to a total function on $\bigcup\tau^j_i$.
\end{proof}

\section{Construction}

\begin{theorem}
  Fix finitely many $\Pi_2$ functionals given by formulas $\forall x\exists y R_0(z,x,y,c)$, $\ldots$, $\forall x\exists y R_{n-1}(z,x,y,c)$ depending on a coloring $c$.  There is a computable $c$ so that for each $j< n$, the set
\[S_j=\{z\mid\forall x\exists y R_j(z,x,y,c)\}\]
fails to be an infinite homogeneous set for $c$.
\end{theorem}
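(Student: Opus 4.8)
The plan is to build the computable coloring $c$ by a finite-injury-style priority construction that, for each $j<n$, defeats $S_j$ by making it either finite or non-homogeneous. The key reduction is to notice that if $S_j$ happens to be infinite, then for $c$ to avoid having $S_j$ homogeneous it suffices to keep producing, for larger and larger finite chunks, a witness to non-homogeneity; conversely, if we cannot do that, it will be because $S_j$ is finite. So I would set up, for each $j$, a sequence of \emph{guesses} at candidate finite subsets of $S_j$: since $S_j$ is $\Pi^0_2$, an initial segment "$z_0<z_1<\dots<z_n$ all lie in $S_j$" is a $\Pi^0_2$ fact, and as $c$ is built we can approximate it. The point of the \emph{$(n+1)$-train} machinery from the previous theorem is exactly this: the nested structure of a train mirrors the way an approximation to "$\{z_0,\dots,z_n\}\subseteq S_j$" can change its mind — each time the $\forall x\exists y$ verification for the current candidate $n+1$-set fails to find a witness $y$ below the current stage, we are driven to a new candidate, and the constraint that new elements appended to a train-set must be larger than the previous set reflects that the approximation only revises at "fresh" large numbers. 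Thus the collection of candidate $(n+1)$-element sets that the construction ever seriously considers, for a fixed $j$, forms an $(n+1)$-train $\tau^j_0,\tau^j_1,\ldots,\tau^j_{m_j}$.

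With that picture in place, the construction runs as follows. I would interleave two jobs. First, a bookkeeping job that, stage by stage, runs the approximations to each $\forall x\exists y R_j(z,x,y,c)$ against the portion of $c$ decided so far, and whenever the approximation for $j$ settles on a new candidate $(n+1)$-set of small elements claimed to lie in $S_j$, records that set as the next car $\tau^j_{i}$ of the train for $j$. Second, a coloring job: once we see (or are forced to hedge against) the possibility that some recorded $\tau^j_i$ will be a genuine subset of $S_j$, we must make sure $c$ restricted to that set is \emph{not} homogeneous, i.e. there are $a,b\in\tau^j_i$ with $c(\{a,b\})=R$ and $a',b'\in\tau^j_i$ with $c(\{a',b'\})=B$. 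But here is the subtlety the combinatorial Theorem handles: we cannot freely recolor pairs, because a single element $a$ may lie in many train-cars across many $j$'s, and coloring its pairs one way to spoil one homogeneous set might be incompatible with what we did for another. What we \emph{can} afford is to impose on each element $a$ a single "spoiler value" $c^*(a)\in\{R,B\}$, and to decree that for any $a$ and any larger $b$, the color of $\{a,b\}$ is forced to be $c^*(a)$ unless it has already been decided — this is a consistent rule because of the train ordering, so that among any two elements of a car the color is governed by the $c^*$-value of the smaller, and we only need, for each car $\tau^j_i$ that ends up $\iota$-homogeneous under $c$, one element carrying the opposite value $\overline\iota$. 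That is precisely the conclusion of the combinatorial Theorem, applied to the (finitely many, for each $j$) trains we have built.

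So the construction is: define $c$ on pairs $\{a,b\}$ with $a<b$ to equal $c^*(a)$, where $c^*$ is the coloring furnished by the combinatorial Theorem applied to the trains $\{\tau^j_i\}$. The circularity — the trains depend on $c$, but $c$ depends on the trains — is resolved by the stagewise construction: at stage $t$ only finitely much of $c$ matters to decide the behavior of the $R_j$-approximations on small inputs, and only finitely many train-cars have appeared, so we may compute $c$ on an initial segment, see which new cars appear, extend $c^*$ accordingly (the Theorem's proof is itself a stagewise extension, so it glues with ours), and continue; total recursiveness of $c$ follows since every pair gets colored once its smaller element's $c^*$-value is fixed, which happens at a finite stage. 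Finally, to verify the theorem: fix $j$ and suppose $S_j$ is infinite. Then the $\Pi^0_2$ approximation to "initial segment of $S_j$ of length $n+1$ lies in $S_j$" stabilizes on genuinely correct cars cofinally, so infinitely many of the recorded $\tau^j_i$ actually satisfy $\tau^j_i\subseteq S_j$; for $S_j$ to be homogeneous, \emph{every} $(n+1)$-subset of $S_j$ — in particular each such $\tau^j_i$ — would have to be homogeneous of some color $\iota$ under $c$, but then by the combinatorial Theorem there is $a\in\tau^j_i$ with $c^*(a)=\overline\iota$, and since all of $\tau^j_i$ lies in $S_j$ and $S_j$ is infinite we can find $b\in S_j$ with $b>a$, whence $c(\{a,b\})=c^*(a)=\overline\iota\neq\iota$, contradicting homogeneity of $S_j$. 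Hence $S_j$ is either finite or non-homogeneous, as required.

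I expect the main obstacle to be making the bookkeeping precise enough that the set of train-cars one records for each $j$ is genuinely finite and genuinely forms an $(n+1)$-train while still capturing \emph{every} $(n+1)$-initial-segment of $S_j$ that could cause trouble — in particular, arguing that one only ever needs to react to initial segments, not to arbitrary $(n+1)$-subsets of $S_j$, and that the "moves" of the $\Pi^0_2$ approximation respect the train condition ($a\in\tau_{i+1}\setminus\tau_i\Rightarrow a>\tau_i$). Once the trains are correctly extracted, the combinatorial Theorem does all the real work of producing a consistent global coloring, so the remaining verification that $c$ is total computable and that each $S_j$ is spoiled is routine.
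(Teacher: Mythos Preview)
Your overall strategy coincides with the paper's: at each stage approximate, for every $j<n$, the first $n+1$ elements of $S_j$, observe that the successive approximations form an $(n{+}1)$-train, and then invoke the combinatorial train lemma to choose, for each $a$, a spoiler color $c^*(a)$ so that no approximant can remain homogeneous.  Where you diverge from the paper, and where there is a real gap, is in how $c^*$ is produced.

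You want a \emph{single} global $c^*$, built incrementally, with $c(\{a,b\})=c^*(a)$ for all $b>a$ once $c^*(a)$ has stabilized; and you justify this by asserting that ``the Theorem's proof is itself a stagewise extension, so it glues with ours.''  It does not.  Look again at the ordering $\prec$ used in the proof of the train lemma: within a fixed train, the later car $\tau^j_{i+1}$ satisfies $\tau^j_{i+1}\prec\tau^j_i$, so it is processed \emph{earlier}, not later.  Thus each time your bookkeeping discovers a new car, that car would sit at the \emph{front} of the $\prec$-enumeration, and rerunning the lemma could overwrite $c^*$-values you have already committed to.  There is no evident reason why the partial $c^*$ you have built so far can be extended to satisfy the new requirement.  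A second, related difficulty: in your setup the pair-coloring $c$ on $[\tau^j_i]^2$ is itself determined by $c^*$, so you are not applying the lemma to a fixed $c$ but asking for a $c^*$ that defeats the colorings it itself induces; the lemma as stated does not address that fixed point.

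The paper dissolves both problems by giving up on a global $c^*$.  At stage $b$ it rebuilds the trains \emph{from scratch}: for each $j$ and each $i\le b$, $\tau^j_i$ is the $n{+}1$ least $a<b$ with $\forall x<i\,\exists y<b\,R_j(a,x,y,c)$ (defaulting to ``true'' where $c$ is not yet defined).  It then applies the train lemma to these finite trains and the already-defined coloring $c\upharpoonright[b]^2$, obtaining a one-shot $c^*_b$, and sets $c(a,b)=c^*_b(a)$ for that particular $b$ only.  There is no circularity, no need for $c^*$ to persist, and the finiteness and train property you worry about at the end are automatic (monotonicity in $i$ of the bounded condition gives the train property; there are at most $b{+}1$ cars per $j$).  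The verification then uses only that for all sufficiently large $b$ the fixed set $\tau^j$ of the first $n{+}1$ elements of $S_j$ occurs as some car $\tau^j_i$: if $c\upharpoonright[\tau^j]^2=\iota$, then for every large $b\in S_j$ some $a\in\tau^j$ has $c(a,b)=c^*_b(a)=\overline\iota$, spoiling homogeneity.  Your argument becomes correct once you replace the single evolving $c^*$ by this per-stage recomputation.
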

\begin{proof}
  We describe how, for a given $b$, we define $c(a,b)$ for all $a<b$.  Fix the value $b$ and suppose we have defined $c(a,a')$ for all $a<a'<b$.  For each $j<n$ we define an $n+1$-train by taking the set $\tau^j_i$ for $i\leq b$ to be the $n+1$ smallest elements $a<b$ such that $\forall x< i\exists y<b R_j(a,x,y,c)$ (where the computation is always true if $c$ is not yet sufficiently defined to interpret $R_j(a,x,y,c)$).  Let $c^*$ be given by the theorem above, and extend $c^*$ to be defined on all $a<b$ by defining it arbitrarily where it is not already defined.  Set $c(a,b)=c^*(a)$ for all $a<b$.

Suppose that for some $j<n$, the set $S_j$ is infinite.  Let $\tau^j$ be the $n+1$ smallest elements of $S_j$.  We claim that, for $b$ sufficiently large, there is always some $i$ so that $\tau^j_i=\tau^j$.  For every $a<\tau^j(n)$ such that $a\not\in\tau^j$, there is some $i$ such that $\exists x\leq i\forall y\neg R_j(a,x,y,c)$, so certainly for every $b$, if $i'\geq i$ and $a\in\tau^j_i$, either $a\in\tau^j$ or $a>\tau^j$.  Let $i$ be large enough to witness this bound for all $a<\tau^j(n)$.

For each $a\in\tau^j$ and each $x\leq i$, there is some $y$ such that $R_j(a,x,y,c)$.  If $b$ is big enough to bound these finitely many values of $y$, it must be the case that $\tau^j_i=\tau^j$.  Therefore for all sufficiently large $b$, $\tau^j_i=\tau^j$.

Since $S_j$ is infinite, let $b$ be some element of $S_j$ sufficiently large so that $\tau^j_i=\tau^j$.  If $c\upharpoonright[\tau^j]=\iota$ then there is some $a\in\tau^j$ with $c(a,b)=c^*(a)=\overline{\iota}$.  Therefore $S_j$ is not homogeneous.
\end{proof}

We can now prove our main theorem:
\begin{theorem}
  There is no partial computable $f$ such that for any $e$, if $e$ is the code of a total computable 2-coloring $c:[\mathbb{N}]^2\rightarrow 2$ of pairs, then $f(e)$ halts, producing the code for a c.e. set $W_{f(e)}$ enumerating a finite set $\{a_0,a_1,\dots,a_{n-1}\}$ of indices for $\Pi^0_2$ sets with the property that at least one of those $\Pi^0_2$ sets is an infinite homogeneous subset for $c$.
\end{theorem}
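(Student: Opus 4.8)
The plan is to assume such an $f$ exists and derive a contradiction by running the construction of the preceding theorem inside an application of the recursion theorem. The point that makes this work is that the construction in the proof of the preceding theorem is carried out stage by stage, its stage-$b$ step consulting only finitely much data, so it tolerates the list of $\Pi^0_2$ functionals being diagonalized against \emph{growing} with $b$, as long as that list eventually stabilizes. This is precisely the situation when the functionals are read off from a c.e.\ set $W_{f(e)}$ that is guaranteed to be finite whenever $e$ codes a total computable $2$-coloring.

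By the recursion theorem, fix an index $e$ for the total computable coloring $c$ defined as follows. To compute $c(a,b)$ for $a<b$: run $f(e)$ for $b$ steps; if it halts within those steps with value $d$, run the enumeration of $W_d$ for $b$ steps and let $a_0,\ldots,a_{n_b-1}$ be the distinct indices produced, in order of first appearance (and $n_b=0$ if $f(e)$ has not yet halted). Writing $R_j$ for the $\Pi^0_2$ matrix coded by $a_j$, carry out the stage-$b$ step of the construction of the preceding theorem for the $n_b$ functionals $R_0,\ldots,R_{n_b-1}$ --- using the values $c(a',a'')$ for $a'<a''<b$ already defined to form the $(n_b+1)$-trains and apply the combinatorial theorem, and fixing the arbitrary extension there to output $R$ --- and let $c(a,b)$ be the value this assigns to $a$. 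Every branch of this procedure terminates, so $c$ is a genuine total computable $2$-coloring, and it has index $e$. By the hypothesis on $f$, then, $f(e)$ halts with some value $d$, and $W_d$ enumerates a finite set of $\Pi^0_2$ indices, at least one of which codes an infinite homogeneous set for $c$.

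Now fix a stage $b_0$ past which $f(e)$ has halted and all of the (say $n$) indices enumerated into $W_d$ have appeared, and let $R_0,\ldots,R_{n-1}$ be their matrices. For every $b\ge b_0$ the stage-$b$ step used to define $c$ is the stage-$b$ step of the construction of the preceding theorem for this fixed list of $n$ functionals, applied to the values of $c$ below $b$ produced so far. The argument in the proof of the preceding theorem that each $S_j=\{z\mid\forall x\exists y R_j(z,x,y,c)\}$ fails to be an infinite homogeneous set only inspects the construction at a single sufficiently large stage $b\in S_j$; adding the further requirement $b\ge b_0$, still met by cofinitely many $b\in S_j$, lets that argument go through verbatim for our $c$. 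Hence each of the $\Pi^0_2$ sets coded in $W_d$ fails to be an infinite homogeneous set for $c$, contradicting the choice of $W_d$. This proves the theorem, and the weaker version with a code for a finite set in place of a code for a c.e.\ set follows a fortiori.

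The main obstacle is exactly this last interleaving argument: one must be sure that allowing the list of functionals to grow with $b$ does not spoil the diagonalization, and the resolution is the observation above that the preceding theorem's argument is purely local in $b$ --- it only ever examines one large stage $b\in S_j$ --- so imposing the single extra demand $b\ge b_0$ costs nothing. Two bookkeeping points support this: the procedure defining $c$ is total no matter how $f(e)$ behaves (each stage is a finite computation, and divergence of $f(e)$ is absorbed by the default output $R$), which is what allows the hypothesis on $f$ to be applied to the self-referential index $e$; and the values of $c$ produced at the earlier, not-yet-stabilized stages are irrelevant, since the diagonalization reads off only $c(a,b)$ at the one large stage $b$.
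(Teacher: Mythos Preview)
Your proof is correct and follows essentially the same approach as the paper: apply the recursion theorem to obtain an index $e$ for a coloring that at each stage $b$ runs the previous theorem's construction against the currently enumerated contents of $W_{f(e)}$, and then argue that once this finite set has stabilized the diagonalization succeeds. You spell out more explicitly than the paper does the reason the delayed stabilization is harmless---namely, that the previous theorem's verification is local in a single large stage $b$---which is exactly the point the paper leaves to the reader.
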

\begin{proof}
Let $f$ be a partial computable function such that for any $e$, if $e$ is the code of a total computable 2-coloring $c:[\mathbb{N}]^2\rightarrow 2$ of pairs, then $f(e)$ halts, producing the code for a c.e. set $W_{f(e)}$ enumerating a set $\{a_0,a_1,\dots,a_{n-1}\}$ of indices for $\Pi^0_2$ sets.


We define a coloring $c$ as follows.  Via the recursion theorem, we obtain the code for $c$, and begin evaluating $f(e)$.  If $f(e)$ has not halted after $b$ steps, we define $c(a,b)$ for $a<b$ arbitrarily.  If $f(e)$ has halted, then we begin enumerating $W_{f(e)}$. If $W_{f(e)}$ is empty after $b$ steps, we continue to define $c(a,b)$ for $a<b$ arbitrarily. Each time that $W_{f(e)}$ enumerates a new element, we continue the construction of $c$ as in the proof of the previous theorem, assuming that $W_{f(e)}$ will never enumerate any new elements.


If $W_{f(e)}$ is indeed finite, then at some point this assumption will be true, and we will be able to conclude that no $a_i$ is the code for an infinite homogeneous $\Pi^0_2$ subset for $c$.
Note that $c$ always produces a total computable 2-coloring, whether or not $f(e)$ halts, so the recursion theorem must produce a value $e$ on which $f(e)$ does halt.
\end{proof}

\end{document}